\newcommand{\N}{{\mathbb N}}
\def\U{\mathcal{U}}
\newcommand{\supp}{\mathrm{supp}}
\newtheorem{theorem}{Theorem}[section]
\newtheorem{lemma}[theorem]{Lemma}
\theoremstyle{definition}
\newcommand{\id}{{\ensuremath{\mathds{1}}}}
\numberwithin{equation}{section}
\begin{document}

\title[Improved not feeling the boundary estimates]
{Heat Kernel estimates for general boundary problems}

\author{Liangpan Li and Alexander Strohmaier}

\dedicatory{Dedicated to the memory of Yuri Safarov (1958--2015)}

\address{Department of Mathematical Sciences,
Loughborough University, LE11 3TU, UK}
 \email{l.li@lboro.ac.uk, a.strohmaier@lboro.ac.uk}

\subjclass[2010]{35K08}

\keywords{Heat kernel, vector-valued Laplacian, finite propagation speed, spectral function,  Neumann boundary problem.}

\begin{abstract}
We show that not feeling the boundary estimates for heat kernels hold for any non-negative self-adjoint 
extension of the Laplace operator acting on vector-valued 
compactly supported functions on a domain in $\mathbb{R}^d$. 
They are therefore valid for any choice of boundary condition and we show that
the implied constants can be chosen independent of the self-adjoint extension.
The method of proof is very general and is based on finite propagation speed 
estimates and explicit Fourier Tauberian theorems obtained by Y. Safarov.
\end{abstract}

\maketitle


\section{Introduction}

Let $U$ be an open  set in $\mathbb{R}^d$ ($d\geq2$) and consider the Dirichlet Laplace operator $\Delta_D$
on $L^2(U)$. Then the fundamental solution $K(t), \;t \geq 0$ of the heat equation with Dirichlet boundary conditions
can be constructed via spectral calculus as
$$
 K_D(t) = \exp(- t\Delta_D).
$$
The integral kernel $K_D(x,y;t)$ of $K_D(t)$ defined by
$$
 (K_D(t) f)(x) = \int_U K_D(x,y;t) f(y) dy
$$
is a positive smooth function on $U \times U \times \mathbb{R}^+$. 
It describes the propagation of heat from the point $x$ to the point $y$ in time $t$.
In case $U=\mathbb{R}^d$ the heat kernel is explicitly given by
\[K_{0}(x,y;t)=(4\pi t)^{-d/2} \exp({-\frac{|x-y|^2}{4t}}).\]
On physical grounds one expects that for small times the heat kernel is dominated by local contributions 
that do not involve the boundary of $U$.
This is essentially the principle of not feeling the boundary by Kac (\cite{Kac}). 
Both 
qualitative and explicit quantitative versions of this principle have been obtained by some authors (\cite{Berg1,Berg2,Hunt}) by exploiting
the probabilistic interpretation of the heat kernel (\cite{Ray54,Rosenblatt,Simon}). The best estimate for 
the Dirichlet Laplacian we are aware of was obtained in
\cite{Berg2} and reads
\begin{equation}\label{BestBerg}
 1 \geq \frac{K_D(x,y;t)}{K_0(x,y;t)} \geq 1- e^{-\delta^2/t} \sum_{j=1}^d \frac{2^j}{(j-1)!} \left( \frac{\delta^2}{t}\right)^{j-1}.
\end{equation}
Here $\delta$ is the distance of the convex hull of $\{ x, y\}$ to the boundary $\partial U$ of $U$. 
It is also known (see e.g. \cite{Berg2,Hsu}) that 
\[
 \lim_{t \to 0_+} t \log \left( 1-  \frac{K_D(x,x;t)}{K_0(x,x;t)} \right) = - \rho^2(x),
\]
where $\rho(x)$ is the distance of $x$ to $\partial U$.
These estimates show that as $t$ goes to $0$ the error in approximating the heat kernel by $K_0(x,y;t)$ is exponentially small with decay rate determined by the distance to the boundary.

Explicit estimates like these are important in spectral geometry and mathematical physics.
For example the meromorphic extension of the local spectral zeta function
is usually based on the expansion of the heat kernel (\cite{Gilkey}). 
The above estimates directly lead to bounds on the local spectral zeta functions or other spectral invariants (\cite{Elizalde}).
A particular example of such a local spectral function is the Casimir energy density that plays a distinguished role in physics.
For these applications it is important to allow for boundary conditions other than Dirichlet. For example Casimir interaction
between two conducting obstacles is described by the Casimir energy density of the photon field. This is obtained from the Laplace operator
acting on one forms with relative boundary conditions. 
To be able to deal with Laplace operators of such type one needs to consider
self-adjoint extensions of the vector-valued Laplace operator on domains that are not simply sums of Laplace operator on functions.
In order to illustrate this let us discuss briefly the example of the propagation of electromagnetic waves, or in a quantum field theoretic description the propagation
of a photon. To keep things simple assume that $U$ is either $\mathbb R^3 \backslash K$, where $K \subset \mathbb R^3$ is a compact subset with smooth 
boundary, or a bounded domain with smooth boundary. If the boundary is modelled as a perfect conductor then separation of variables in the wave equation results in the Laplace operator
acting on $\mathbb C^3$-valued functions and the following boundary conditions for the electromagnetic vector potential $\mathbf{A}(x)$ of the form
$$
 \mathbf{n}(x) \times \mathbf{A}(x) = 0, \quad \nabla  \mathbf{A}(x) = 0
$$
for all $x \in \partial U$. Here $ \mathbf{n}(x)$ is the outward pointing unit-vector-field on the boundary $\partial U$ of $U$.
These boundary conditions define a self-adjoint extension of the Laplace operator acting on $\mathbb C^3$-valued smooth compactly 
supported functions. This self-adjoint extension is however not simply a sum of operators acting on function, as the boundary conditions are different for the different components. 
In fact, the wave group as well as the heat semi-group will in general mix the components of the vector they are acting on.  In physics this corresponds to the fact that mirrors
change the polarization of light.
We would like to refer the reader to \cite{Bordag} for further details and references on Casimir energy
density computations.

The aim of this note is to show that explicit not feeling the boundary estimates can be obtained for any self-adjoint extension of the Laplace operator
acting on vector-valued functions on a domain. 
They can be derived from a combination of finite propagation speed estimates and explicit Fourier Tauberian theorems
that were found by Safarov in \cite{Safarov}. The idea of using finite propagation speed estimates in this context is not new and is already present 
in the classical paper
\cite{Cheeger}. It has since been used by many authors to derive heat kernel bounds on manifolds (see e.g. \cite{Coulhon,Dodziuk,LS,PV,Sikora}). The combination with
the estimates of the spectral function obtained via Fourier Tauberian arguments gives bounds that in some regimes are better than the known estimates for the Dirichlet
heat kernel. The implied constants are independent of the boundary conditions.

To describe the main result let us assume that, as before, $U$ is an open  set in $\mathbb{R}^d, \;d\geq2$ and denote by  $\rho(x)$  the distance from $x\in U$
to the boundary of $U$.
Let $N$ be a positive integer. Consider  in the Hilbert space $L^{2}(U;\mathbb{C}^N)$ 
an arbitrary  non-negative self-adjoint extension $\Delta_U$ of  the  Laplacian
 \[-\big(\frac{\partial^2}{\partial x_1^2}+\cdots+\frac{\partial^2}{\partial x_d^2}\big):C_c^{\infty}(U;\mathbb{C}^N)\rightarrow C_c^{\infty}(U;\mathbb{C}^N)\]
 acting component-wise.
 The heat kernel  for $\Delta_U$, denoted by
 \[\mathbf{K}_U(x,y;t)=\left(\begin{array}{ccc}
K_U^{(11)}(x,y;t) & \cdots & K_U^{(1N)}(x,y;t)\\
\vdots& \ddots &\vdots\\ K_U^{(N1)}(x,y;t) & \cdots & K_U^{(NN)}(x,y;t)
 \end{array}
 \right),\]  
is the integral kernel of  $e^{-t\Delta_U}, \;t>0$ defined by the functional calculus of self-adjoint operators.
When $U=\mathbb{R}^d$  the counterpart for
$\Delta_U$  and $\mathbf{K}_{U}(x,y;t)$ is denoted respectively by $\Delta_0$  and $\mathbf{K}_{0}(x,y;t)$. 
Of course,
\[\mathbf{K}_{0}(x,y;t)=(4\pi t)^{-d/2} \exp({-\frac{|x-y|^2}{4t}}) \id.\]

\begin{theorem}\label{Theorem11}
There exist two positive constants $C_1,C_2$ depending only $d$ such that
if $t\leq\frac{(\rho(x)+\rho(y))^2}{8}$ then
\begin{align*} \|\mathbf{K}_{U}(x,y;t)-\mathbf{K}_{0}(x,y;t)\| 
 \leq
\big(C_1\rho(x,y)^{-d}+C_2\big)\cdot \frac{\displaystyle\exp\left(-\frac{(\rho(x)+\rho(y))^2}{4t}\right)}{\displaystyle t^{2\lceil\frac{d+1}{2}\rceil-\frac{1}{2}}}.
\end{align*}
Here $\rho(x,y)=\min(\rho(x),\rho(y))$.
\end{theorem}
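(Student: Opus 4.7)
The plan is to express the heat kernel as a Gaussian average of the wave propagator via subordination, use finite propagation speed to cancel the small-$|s|$ part of this average, and control the remaining tail with Safarov's explicit Fourier Tauberian theorems.

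Starting from the identity $e^{-t\lambda^2}=(4\pi t)^{-1/2}\int_{-\infty}^{\infty} e^{-s^2/(4t)}\cos(s\lambda)\,ds$ and applying functional calculus to $\sqrt{\Delta_U}$ and $\sqrt{\Delta_0}$, I would write, at the level of integral kernels and interpreted after pairing with test functions since the wave kernel is distributional in $s$,
$$\mathbf{K}_U(x,y;t)-\mathbf{K}_0(x,y;t)=\frac{1}{\sqrt{4\pi t}}\int_{-\infty}^{\infty} e^{-s^2/(4t)}\bigl[\cos(s\sqrt{\Delta_U})-\cos(s\sqrt{\Delta_0})\bigr](x,y)\,ds.$$
Since $\Delta_U$ and $\Delta_0$ coincide on $C_c^{\infty}(U;\mathbb{C}^{N})$, a finite-propagation-speed argument for the wave equation yields $\cos(s\sqrt{\Delta_U})(x,y)=\cos(s\sqrt{\Delta_0})(x,y)$ for $|s|<R:=\rho(x)+\rho(y)$: a wave from $y$ only influences $x$ through the boundary once it has travelled to $\partial U$ and back out to $x$, and the minimal such broken path length is $\rho(y)+\rho(x)$. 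The difference is thus supported on $|s|\geq R$, and the Gaussian weight immediately produces the expected factor $e^{-R^2/(4t)}$.

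The remaining work is to bound the tail $\int_{|s|\geq R}(\cdot)\,ds$ polynomially in $t^{-1}$. Since the wave kernel is only a tempered distribution in $s$, with order of growth governed by the spectral function $e_U(x,y;\mu)$ of $\sqrt{\Delta_U}$ (which behaves like $\mu^d$), I would introduce a smooth cut-off and integrate by parts in $s$, transferring $\partial_s^{2k}$ onto the Gaussian and gaining factors of $\mu^{-2k}$ on the spectral side — enough to make the resulting spectral integral converge absolutely. Safarov's Fourier Tauberian theorems in \cite{Safarov} then supply the necessary pointwise bounds on $e_U(x,y;\mu)$: crucially, these bounds depend only on the dimension and on $\rho(x),\rho(y)$, and not on the particular self-adjoint extension, precisely because their only input is the finite-propagation comparison already established on $|s|<R$. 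The number of integrations by parts dictated by $d$ produces the power $t^{-(2\lceil(d+1)/2\rceil-1/2)}$, while the split $C_1\rho(x,y)^{-d}+C_2$ reflects two competing regimes of the Tauberian bound — a main term scaling like $\min(\rho(x),\rho(y))^{-d}$ and an additive contribution from low frequencies.

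The main obstacle is producing the spectral-function bounds in a form that is genuinely uniform over all non-negative self-adjoint extensions and requires no regularity of $\partial U$. This is exactly what Safarov's Tauberian theorems are designed for, but in practice the test functions and associated constants have to be chosen so that nothing surreptitiously depends on the extension. A secondary technical point is the vector-valued setting: subordination and finite propagation pass componentwise to $\mathbb{C}^{N}$-valued functions without change, but the pointwise estimates must be interpreted in operator norm, and the agreement of $\Delta_U$ with $\Delta_0$ on $C_c^{\infty}(U;\mathbb{C}^{N})$ must be applied in its full matrix form throughout.
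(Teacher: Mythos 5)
Your outline is essentially the paper's strategy -- subordination formula, finite propagation speed to kill the small-$|s|$ part, integration by parts against a cutoff, and Safarov's explicit Tauberian bounds to keep everything uniform over the self-adjoint extension -- but two of the steps as you state them would not go through without modification, and these are precisely the two bridging devices the paper uses.

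First, you write that Safarov's Fourier Tauberian theorems supply pointwise bounds on the off-diagonal spectral function $e_U(x,y;\mu)$. They do not. Safarov's argument bounds only the \emph{diagonal} spectral function, because it rests on the fact that $\chi_+(\lambda)\,e(x,x;\lambda^2)$ is a non-decreasing function of $\lambda$, a monotonicity which fails off the diagonal. The paper's workaround is not to transfer derivatives onto the Gaussian and hit the spectral measure with $\mu^{-2k}$ directly, but to apply $(1+\Delta_V)^{-m}$ to the wave propagator, write the pairing as $\langle(1+\Delta_V)^{-m/2}\delta_x^{(v)},\cos(s\sqrt{\Delta_V})(1+\Delta_V)^{-m/2}\delta_y^{(w)}\rangle$, and bound it via Cauchy--Schwarz by the geometric mean of the diagonal Green's function values $\mathbf{G}_V^{(m)}(x,x)$ and $\mathbf{G}_V^{(m)}(y,y)$. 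Only then does one integrate Safarov's diagonal bound on $\mathbf{e}(x,x;\lambda)$ against $(1+\lambda)^{-m}$ to control $\mathbf{G}_U^{(m)}(x,x)$. Without this reduction your spectral integral is not a positive measure and the Tauberian input is unavailable.

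Second, the claim that the wave kernels agree for $|s|<\rho(x)+\rho(y)$ is correct but does not follow from a single application of finite propagation speed, which only gives agreement for $|s|<\max(\rho(x),\rho(y))$ (propagate from whichever point is farther from the boundary; for such $s$ the signal has not yet reached $\partial U$). To reach $\rho(x)+\rho(y)$ the paper decomposes $s=s_1+s_2$ with $|s_1|<\rho(x)$, $|s_2|<\rho(y)$, $s_1s_2\geq 0$, and uses the cosine addition formula $\cos(s\sqrt{\Delta})=2\cos(s_1\sqrt{\Delta})\cos(s_2\sqrt{\Delta})-\cos((s_1-s_2)\sqrt{\Delta})$, moving the short-time propagators onto the $\delta_x$ and $\delta_y$ sides by self-adjointness so that each factor sees only a wave that has not left $U$. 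Your ``minimal broken path length'' heuristic is standing in for exactly this algebraic lemma; as a proof it needs this, or an equivalent domain-of-dependence argument, written out.
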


The constants $C_1,C_2$ can be explicitly given, but we refer the reader to the relevant section of this paper for the full description.
As a corollary, we are able to answer a question raised in \cite{Lacey} about an upper estimate for the Neumann heat kernel.

\section{Vector-valued Laplacians}


Throughout  we fix some notations:
 Let $m\in\mathbb{N}$ with $m>\frac{d}{2}$.  Let $V$ denote either $U$ or $0$. 
 Let $\mathbf{G}_V^{(m)}$ denote the (distributional) integral kernel of the operator $(1+\Delta_V)^{-m}$. If $N=1$ we also write  $G_V^{(m)}$ for 
 $\mathbf{G}_V^{(m)}$. By (local) elliptic regularity $\mathbf{G}_V^{(m)}$ is continuous on the open set $U \times U$.
For any $R>0$ define
\begin{equation}\label{formula12}
J_m(R;t)=\inf_{\psi\in A_R}J_m(\psi;t)\ \ \ (R>0),
\end{equation}
where $A_R$ is the set of real-valued functions $\psi$ in $C^{2m}(\mathbb{R})$ such that
$
\mathrm{Supp}(1-\psi)\subset(-R,R)
$, and
\begin{equation}\label{formula11}
J_{m}(\psi;t)=\int_{\mathbb{R}}\Big|(1-\frac{d^2}{ds^2})^{m}\left(\psi(s)e^{-\frac{s^2}{4t}}\right)\Big|ds.
\end{equation} 
Any matrix of size $N\times N$ can be naturally regarded as a linear operator on the Hilbert space $\mathbb{C}^{N}$, so we let
$\|\cdot\|$ denote its operator norm.

\subsection{Finite propagation speed}\label{sub21}

Before we start let us make some notational remarks.
Let  $x,y\in U$, $v,w\in\mathbb{C}^N$.
We denote $\delta_x^{(v)}=\delta_x\otimes v$, where $\delta_x$ is the Dirac delta distribution at $x$.
Strictly speaking, $\delta_x\otimes v$ is not in the domain of the self-adjoint operators $e^{-t\Delta_V}$ and $\cos(s\sqrt{\Delta_V})$.
We understand however expressions such as $\cos(s\sqrt{\Delta_V})(\delta_x\otimes v)$ as distributions (in $x$) with values in the Hilbert space
$L^2(\U; \mathbb{C}^N)$. Pairing with the test function $\varphi \in C^\infty_c(U)$
is defined as $\cos(s\sqrt{\Delta_V})(\varphi \otimes v)$. As usual, the expression $\cos(s\sqrt{\Delta_V})(\delta_x)$
is then understood as a distribution with values in $L^2(\U; \mathbb{C}^N \otimes (\mathbb{C}^N)^*)= L^2(\U; \mathrm{Mat}(N,\mathbb C))$
With this notation the distributional integral kernel $\mathbf{k} \in \mathcal{D}'(U \times U; \mathrm{Mat}(N,\mathbb C))$
of an operator $K$ is $\mathbf{k}(x,y)=\langle \delta_x, K \delta_y \rangle$.
In particular, expressions of the form $\langle \delta_x^{(v)}, K \delta_y^{(w)} \rangle$ are bi-distributions and the pairing with test
functions  $\varphi_1 \otimes \varphi_2 \in C^\infty_c(U \times U)$ is given by
$\langle \varphi_1 \otimes v, K \varphi_2 \otimes w \rangle = \langle v, \mathbf{k} \;w \rangle_{\mathbb{C}^N}(\varphi_1 \otimes \varphi_2)$.

 Alternatively, one can also understand $\cos(s\sqrt{\Delta_V})(\delta_x\otimes v)$ as the distributional limit
of a sequence $\cos(s\sqrt{\Delta_V})(\varphi_n \otimes v)$, where $\varphi_n$ is a $\delta$-family centered at $x$. 
Note that $\cos(s\sqrt{\Delta_V})$ is formally self-adjoint, and a continuous map from $C_c^{\infty}(U;\mathbb{C}^N)$
to $C^{\infty}(U;\mathbb{C}^N)$. This follows from (local) elliptic regularity
because $\Delta^m_V \cos(s\sqrt{\Delta_V})=  \cos(s\sqrt{\Delta_V}) \Delta^m_V$ is a continuous map from $C_0^{\infty}(\U) \to L^2(\U)$ for any $m \in \N$. 
 It therefore extends by duality to a continuous map from
$\mathcal{E}'(U;\mathbb{C}^N)$ to $\mathcal{D}'(U;\mathbb{C}^N)$. As usual, here $\mathcal{D}'(U;\mathbb{C}^N)$ denotes the space of distributions
with values in $\mathbb{C}^N$, and $\mathcal{E}'(U;\mathbb{C}^N)$ denotes the subspace of distributions of compact support.

\begin{theorem}\label{Theorem21} 
The following pointwise estimate holds for the heat kernel:
\begin{align*}
\|\mathbf{K}_U(x,y;t)-\mathbf{K}_0(x,y;t)\| & \\ \leq
\Big(\big(\|\mathbf{G}_U^{(m)}(x,x)\| & \|\mathbf{G}_U^{(m)}(y,y)\|\big)^{\frac{1}{2}}  +\frac{\Gamma(m-\frac{d}{2})}{(4\pi)^{\frac{d}{2}}(m-1)!}\Big)\frac{J_{m}(\rho(x)+\rho(y);t)}{2\sqrt{\pi t}}.
\end{align*}
\end{theorem}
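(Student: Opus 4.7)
My strategy is to subordinate the heat semigroup to the wave cosine via the Gaussian Fourier identity
\[
  e^{-t\Delta_V} = \frac{1}{2\sqrt{\pi t}}\int_{-\infty}^{\infty} e^{-s^2/(4t)}\cos(s\sqrt{\Delta_V})\,ds,
\]
valid by the spectral theorem for $\sqrt{\Delta_V}\ge 0$, and then to exploit finite propagation speed. For arbitrary $\psi\in A_R$ with $R=\rho(x)+\rho(y)$, decompose $1=(1-\psi(s))+\psi(s)$. On $\supp(1-\psi)\subset(-R,R)$, finite propagation speed applied to both wave groups forces the matrix elements $\langle\delta_x^{(v)},\cos(s\sqrt{\Delta_U})\delta_y^{(w)}\rangle$ and $\langle\delta_x^{(v)},\cos(s\sqrt{\Delta_0})\delta_y^{(w)}\rangle$ to coincide, since any boundary-dependent contribution must travel to $\partial U$ and back---a round trip of length at least $\rho(x)+\rho(y)$. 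Only the $\psi$-piece therefore survives in the difference, giving
\[
 \mathbf{K}_U(x,y;t)-\mathbf{K}_0(x,y;t) = \frac{1}{2\sqrt{\pi t}}\int_\R \psi(s)e^{-s^2/(4t)}\bigl[\cos(s\sqrt{\Delta_U})-\cos(s\sqrt{\Delta_0})\bigr](x,y)\,ds.
\]

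To legitimise pointwise evaluation I would then insert $I=(1+\Delta_V)^{m}(1+\Delta_V)^{-m}$ inside the integral and use the intertwining $(1-\partial_s^2)^m\cos(s\sqrt{\Delta_V})=\cos(s\sqrt{\Delta_V})(1+\Delta_V)^m$. Because $\psi(s)e^{-s^2/(4t)}$ and all its derivatives decay rapidly, integration by parts in $s$ transfers the differential operator onto the cut-off Gaussian,
\[
 \int_\R \psi(s)e^{-s^2/(4t)}\cos(s\sqrt{\Delta_V})\,ds = \int_\R (1-\partial_s^2)^m\!\bigl(\psi(s)e^{-s^2/(4t)}\bigr)\,\cos(s\sqrt{\Delta_V})(1+\Delta_V)^{-m}\,ds,
\]
and since $m>d/2$ local elliptic regularity guarantees that $\cos(s\sqrt{\Delta_V})(1+\Delta_V)^{-m}$ has a continuous integral kernel, so pointwise evaluation at $(x,y)$ is meaningful.

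The pointwise bound follows by factoring $(1+\Delta_V)^{-m}=(1+\Delta_V)^{-m/2}(1+\Delta_V)^{-m/2}$, commuting one factor across the cosine, and combining Cauchy--Schwarz with $\|\cos(s\sqrt{\Delta_V})\|_{L^2\to L^2}\le 1$:
\[
 \bigl\|\cos(s\sqrt{\Delta_V})(1+\Delta_V)^{-m}(x,y)\bigr\|
 \le \bigl(\|\mathbf{G}_V^{(m)}(x,x)\|\cdot\|\mathbf{G}_V^{(m)}(y,y)\|\bigr)^{1/2},
\]
where I use the identity $\|(1+\Delta_V)^{-m/2}\delta_x^{(v)}\|_{L^2}^2=\langle v,\mathbf{G}_V^{(m)}(x,x)v\rangle_{\C^N}$, finite precisely because $m>d/2$. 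For $V=0$ translation invariance collapses the right-hand side to the explicit constant $G_0^{(m)}(0)=\Gamma(m-d/2)/((4\pi)^{d/2}(m-1)!)$, obtained from the Bessel-potential representation $G_0^{(m)}(0)=\frac{1}{\Gamma(m)}\int_0^\infty t^{m-1}e^{-t}(4\pi t)^{-d/2}\,dt$. Bounding the $s$-integral by the $L^1$ norm of $(1-\partial_s^2)^m(\psi(s)e^{-s^2/(4t)})$, adding the $V=U$ and $V=0$ contributions, and taking the infimum over $\psi\in A_R$ introduces precisely $J_m(R;t)$ from \eqref{formula12}, which yields the claimed inequality.

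The main technical obstacle is the rigorous handling of the finite propagation speed identity for distributional kernels, as $\delta_x^{(v)}$ does not lie in the domain of $\cos(s\sqrt{\Delta_V})$. I would resolve this by approximating $\delta_x^{(v)}$ and $\delta_y^{(w)}$ by smooth compactly supported families $\varphi_n\otimes v$, $\xi_n\otimes w$ with supports of radius $\epsilon_n\to 0$, invoking the standard energy-estimate proof of finite propagation speed for the wave equation associated with any non-negative self-adjoint extension of the Laplacian, and passing to the distributional limit in the pairing. The window of agreement shrinks only by $O(\epsilon_n)$ from $\rho(x)+\rho(y)$ and is therefore preserved in the limit, giving the required identity for $|s|<\rho(x)+\rho(y)$.
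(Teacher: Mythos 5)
Your proposal reproduces the paper's strategy almost step by step: the Gaussian subordination formula, the $1=(1-\psi)+\psi$ decomposition, the insertion of $(1+\Delta_V)^m(1+\Delta_V)^{-m}$ followed by integration by parts onto $\psi(s)e^{-s^2/(4t)}$, the Cauchy--Schwarz bound via $\|(1+\Delta_V)^{-m/2}\delta_x^{(v)}\|^2=\langle v,\mathbf{G}_V^{(m)}(x,x)v\rangle$, the explicit value of $G_0^{(m)}(0)$, and the final passage to $J_m(R;t)$. All of these match the published argument, so this is essentially the same route, not a different one.

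The one genuine gap is the finite propagation speed step. You assert that $\langle\delta_x^{(v)},\cos(s\sqrt{\Delta_U})\delta_y^{(w)}\rangle$ and $\langle\delta_x^{(v)},\cos(s\sqrt{\Delta_0})\delta_y^{(w)}\rangle$ coincide for $|s|<\rho(x)+\rho(y)$ because ``any boundary-dependent contribution must travel to $\partial U$ and back.'' The basic finite propagation speed statement you have at hand---that $\cos(s\sqrt{\Delta_V})f$ is supported within distance $|s|$ of $\supp f$, so that $\cos(s\sqrt{\Delta_U})\delta_y^{(w)}=\cos(s\sqrt{\Delta_0})\delta_y^{(w)}$ for $|s|<\rho(y)$---only gives agreement of the matrix element on $|s|<\max\{\rho(x),\rho(y)\}$ (by switching which delta the cosine acts on). Extending to the \emph{sum} $\rho(x)+\rho(y)$ requires an extra mechanism. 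Your ``round trip'' heuristic is physically correct, but making it rigorous means a refined domain-of-influence argument for the difference $u_U-u_0$, which solves the wave equation with vanishing Cauchy data but nonzero boundary forcing that switches on at time $\rho(y)$; this is more than an invocation of the standard energy estimate. The paper sidesteps this entirely with the elementary algebraic identity
\[
\cos(s\sqrt{\Delta_V})=2\cos(s_1\sqrt{\Delta_V})\cos(s_2\sqrt{\Delta_V})-\cos((s_1-s_2)\sqrt{\Delta_V}),
\]
writing $s=s_1+s_2$ with $|s_1|<\rho(x)$, $|s_2|<\rho(y)$, $s_1s_2\ge0$. Each factor then involves only times below the single-point thresholds, so the basic compact-support version of finite propagation speed already yields the claim, with no domain-of-influence argument needed. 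You should replace the heuristic by this identity (or supply the missing Duhamel-type argument) to close the proof.
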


\begin{proof}
Let $\psi\in A_R$ where $R=\rho(x)+\rho(y)$.
 It is well-known (see e.g. \cite{Taylor}) that
\begin{align}
e^{-t\Delta_V}&=\frac{1}{2\sqrt{\pi t}}\int_{\mathbb{R}}\cos(s\sqrt{\Delta_V})e^{-\frac{s^2}{4t}}ds\nonumber\\
&=\frac{1}{2\sqrt{\pi t}}\int_{\mathbb{R}}\cos(s\sqrt{\Delta_V})(1-\psi(s))e^{-\frac{s^2}{4t}}ds\nonumber\\
&\ \ \ \ 
+\frac{1}{2\sqrt{\pi t}}\int_{\mathbb{R}}(1+\Delta_V)^{-m}\cos(s\sqrt{\Delta_V})(1-\frac{d^2}{ds^2})^m(\psi(s)e^{-\frac{s^2}{4t}})ds.\label{221}
\end{align}
Finite propagation speed for the wave equation implies that if $|s_1|<\rho(x)$ then 
$\cos(s_1\sqrt{\Delta_0})\delta_x^{(v)}$ 
has compact support in $U$ and  agrees with $\cos(s_1\sqrt{\Delta_U})\delta_x^{(v)}$. 
Note that any $s\in\mathbb{R}$ with $|s|<\rho(x)+\rho(y)$ can be written as $s=s_1+s_2$ with
$|s_1|<\rho(x)$, $|s_2|<\rho(y)$,  $s_1s_2\geq0$. With this decomposition available and  by considering
\[\cos(s\sqrt{\Delta_V})=2\cos(s_1\sqrt{\Delta_V})\cos(s_2\sqrt{\Delta_V})-\cos((s_1-s_2)\sqrt{\Delta_V})\cos(0\sqrt{\Delta_V})\]
as well as $|s_1-s_2|<\max\{\rho(x),\rho(y)\}$, $0<\min\{\rho(x),\rho(y)\}$, one obtains
\[\langle\delta_x^{(v)},\cos(s\sqrt{\Delta_U})\delta_y^{(w)}\rangle-\langle\delta_x^{(v)},\cos(s\sqrt{\Delta_0})\delta_y^{(w)}\rangle=0\]
for any $s\in\mathbb{R}$ with $|s|<\rho(x)+\rho(y)$. As $\supp(1-\psi)\subset(-\rho(x)-\rho(y),\rho(x)+\rho(y))$, we get
\begin{equation}
(1-\psi(s))\langle\delta_x^{(v)},\cos(s\sqrt{\Delta_U})\delta_y^{(w)}\rangle-(1-\psi(s))\langle\delta_x^{(v)},\cos(s\sqrt{\Delta_0})\delta_y^{(w)}\rangle=0
\label{222}
\end{equation}
for any $s\in\mathbb{R}$. On the other hand, note that
\begin{align*}
\langle\delta_x^{(v)},(1+\Delta_V)^{-m}\cos(s\sqrt{\Delta_V})\delta_y^{(w)}\rangle=\langle(1+\Delta_V)^{-\frac{m}{2}}\delta_x^{(v)},
\cos(s\sqrt{\Delta_V})(1+\Delta_V)^{-\frac{m}{2}}\delta_y^{(w)} \rangle.
\end{align*}
Applying the Cauchy-Schwarz inequality several times this gives
\begin{equation}
|\langle\delta_x^{(v)},(1+\Delta_V)^{-m}\cos(s\sqrt{\Delta_V})\delta_y^{(w)}\rangle|\leq|v||w|\big(\|\mathbf{G}_V^{(m)}(x,x)\|\|\mathbf{G}_V^{(m)}(y,y)\|\big)^{1/2}\label{223}
\end{equation}
for any $s\in\mathbb{R}$. Combining (\ref{222}), (\ref{223}), (\ref{29}) with (\ref{221}) suffices to conclude the proof.
\end{proof}

\subsection{Safarov's estimate}\label{sub22}

Since $\Delta_U$ is a non-negative self-adjoint operator, by the spectral theorem
\[\Delta_U=\int_{0}^{\infty}\lambda\; d\Pi(\lambda),\]
where  $\Pi(\lambda)$ ($\lambda\geq0$) denotes
the spectral projection of $\Delta_U$ onto the interval $[0,\lambda]$.
The so-called spectral function $\mathbf{e}(x,y;\lambda)$, defined to be the integral kernel of
$\Pi(\lambda)$, is smooth in $U\times U$ for each fixed $\lambda$.

If $N=1$ we also write $e(x,y;\lambda)$ for $\mathbf{e}(x,y;\lambda)$. Safarov (\cite[Cor. 3.1]{Safarov})
proved for every $x\in U$ and all $\lambda>0$ that
\begin{equation}
e(x,x;\lambda)\leq C_d^{(1)}\lambda^{d/2}+\frac{C_d^{(2)}}{\rho(x)}\Big(\lambda^{1/2}+\frac{C_d^{(3)}}{\rho(x)}\Big)^{d-1},\label{Safarov}\end{equation}
where $C_d^{(1)}$, $C_d^{(2)}$, $C_d^{(3)}$ are universal constants given respectively by
$C_d^{(1)}=\omega_d(2\pi)^{-d}$ with $\omega_d$ denoting the volume of the unit ball in $\mathbb{R}^d$, $C_d^{(2)}=
dC_d^{(1)}(2\pi^{-1}(C_d^{(3)})^2+C_d^{(3)})$, $C_d^{(3)}\leq 2m_d3^{\frac{1}{2m_d}}$, where $m_d=\lceil\frac{d+1}{2}\rceil$ (see \cite[Lemma 2.6]{Safarov}). We should mention that  Safarov originally established (\ref{Safarov})
 by understanding $e(x,x;\lambda)$  as the integral kernel of $\frac{\Pi(\lambda-0)+\Pi(\lambda+0)}{2}$.
But the right hand side of (\ref{Safarov}) is a continuous function of $\lambda>0$,  (\ref{Safarov})
also holds for our choice of the spectral function.

The key points for proving (\ref{Safarov}) are the fact (see \cite[Lemma 2.7, Cor. 3.1]{Safarov}) that
$\chi_{+}(\lambda) e(x,x;\lambda^2)$ is a non-decreasing function of $\lambda$ on $\mathbb{R}$, and
the cosine Fourier transform of \[(C_d^{(1)})^{-1}\cdot\frac{d}{d\lambda}\big(\chi_{+}(\lambda) e(x,x;\lambda^2)\big)\] coincides 
on the interval $(-\rho(x),\rho(x))$ with the cosine Fourier transform of
 $d\lambda_{+}^{d-1}$.
 Here $\chi_{+}$ is the characteristic function of the positive axis. The latter property can be seen from the finite propagation speed for the wave equation.
 
In the vector-valued situation, we claim as (non-negative) self-adjoint matrices, 
 \begin{equation}
 \mathbf{e}(x,x;\lambda)\leq \Big(C_d^{(1)}\lambda^{d/2}+\frac{C_d^{(2)}}{\rho(x)}\Big(\lambda^{1/2}+\frac{C_d^{(3)}}{\rho(x)}\Big)^{d-1}\Big) \id \label{Safarov3}\end{equation} 
for every $x\in U$ and all $\lambda>0$. To this end
we see once again from the finite propagation speed for the wave equation that for each fixed unit vector $v\in\mathbb{C}^N$, 
the cosine Fourier transform of
 \[(C_d^{(1)})^{-1}\cdot\frac{d}{d\lambda}\big(\chi_{+}(\lambda)\langle \delta_x^{(v)}, \Pi(\lambda^2)\delta_x^{(v)}\rangle\big)\] 
 coincides 
on the interval $(-\rho(x),\rho(x))$ with the cosine Fourier transform of
 $d\lambda_{+}^{d-1}$. 
 Also, $\chi_{+}(\lambda)\langle \delta_x^{(v)}, \Pi(\lambda^2)\delta_x^{(v)}\rangle$ is a non-decreasing function of $\lambda$ on $\mathbb{R}$.
 So similar to (\ref{Safarov}) we have
\[
\langle\delta_x^{(v)}, \Pi(\lambda)\delta_x^{(v)}\rangle\leq  C_d^{(1)}\lambda^{d/2}+\frac{C_d^{(2)}}{\rho(x)}\Big(\lambda^{1/2}+\frac{C_d^{(3)}}{\rho(x)}\Big)^{d-1} \ \ \ (x\in U, \lambda>0),\]
which proves (\ref{Safarov3}).
For simplicity, applying H\"{o}lder's and Young's inequalities to the right hand side of (\ref{Safarov3}) gives for every $x\in U$ and all $\lambda>0$ that
\begin{equation}
\mathbf{e}(x,x;\lambda)\leq (C_d^{(4)}\lambda^{d/2}+C_d^{(5)}\rho(x)^{-d})  \id,\label{Safarov2}
\end{equation}
where $C_d^{(4)}=(C_d^{(1)}+\frac{d-1}{d}2^{d-2}C_d^{(2)})$,  $C_d^{(5)}=2^{d-2}C_d^{(2)}((C_d^{(3)})^{d-1}+\frac{1}{d})$.

According to the functional calculus of self-adjoint operators, we have
\[(1+\Delta_U)^{-m}=\int_{0}^{\infty}\frac{1}{(1+\lambda)^m} d\Pi(\lambda).\]
For $m>0$ this integral can be understood as an operator-valued integral that
converges in the strong operator topology. For the purposes of this paper it is enough to understand it
in the weak sense as a statement about quadratic forms. 
For $m>\frac{d}{2}$ 
the integral kernel $\mathbf{G}_U^{(m)}(x,y)$ of
$(1+\Delta_U)^{-m}$ is continuous on $U \times U$ and we have
$$
 \mathbf{G}_U^{(m)}(x,x) = \int_{0}^{\infty}\frac{1}{(1+\lambda)^m} d\mathbf{e}(x,x;\lambda).
$$
Pointwise convergence of the integral can easily seen as follows. Choose $s \in \mathbb R$ such that $m>s>\frac{d}{2}$. The operator
$(1+\Delta_U)^{s/2}$ commutes with the spectral measure and $(1+\Delta_U)^{s/2} (1+\Delta_U)^{-m} (1+\Delta_U)^{s/2}$ is bounded. Therefore,
 its integral spectral representation converges in the sense of quadratic forms. Since $(1+\Delta_U)^{-s/2}$ maps
$L^2(U; \mathbb C^N)$ to $H^s_{loc}(U; \mathbb C^N)$ it extends by duality to a continuous map $H^{-s}_{comp}(U; \mathbb C^N)$ to $L^2(U; \mathbb C^N)$.
We conclude that
\[(1+\Delta_U)^{-m}=\int_{0}^{\infty}\frac{1}{(1+\lambda)^m} d\Pi(\lambda)\]
converges in the sense of quadratic forms on $H^{-s}_{comp}(U; \mathbb C^N)$. By the Sobolev embedding theorem $\delta^{(v)}_x$ is in $H^{-s}_{comp}(U; \mathbb C^N)$.
Considering $m>\frac{d}{2}$, (\ref{Safarov2}), $\mathbf{e}(x,x;0)\geq0$, and the following equivalent representation of the classical Beta function
\[B(\alpha,\beta)=\int_0^{\infty}\frac{\lambda^{\alpha-1}}{(1+\lambda)^{\alpha+\beta}}d\lambda\ \ \ (\mathrm{Re}(\alpha)>0,\ \mathrm{Re}(\beta)>0),\]
one can use integration by parts to get
\begin{align}
\mathbf{G}_U^{(m)}(x,x)&=\int_{0}^{\infty}\frac{1}{(1+\lambda)^m} d\mathbf{e}(x,x;\lambda)\nonumber\\
&= m\int_{0}^{\infty}\frac{\mathbf{e}(x,x;\lambda)}{(1+\lambda)^{m+1}} d\lambda-\mathbf{e}(x,x;0)\nonumber\\
&\leq \Big(m\int_{0}^{\infty}\frac{C_d^{(4)}\lambda^{d/2}+C_d^{(5)}\rho(x)^{-d}}{(1+\lambda)^{m+1}} d\lambda\Big) \id\nonumber\\
&=\big(mC_d^{(4)}B(1+\frac{d}{2},m-\frac{d}{2})+C_d^{(5)}\rho(x)^{-d}\big) \id.\label{28}
\end{align}
On the other hand we have  $\mathbf{e}_0(x,x;\lambda)=C_d^{(1)}\lambda^{d/2}\id$ (see \cite[Example 3.1]{Strichartz}), 
where $\mathbf{e}_0(x,y;\lambda)$ denotes the spectral function of $\Delta_0$.
Therefore, one obtains
\begin{align}
\mathbf{G}_0^{(m)}(x,x)=\frac{\Gamma(m-\frac{d}{2})}{(4\pi)^{\frac{d}{2}}(m-1)!}\id.\label{29}
\end{align}
Finally, by considering (\ref{28}) and (\ref{29}) and by introducing
\[C_d^{(6)}=mC_d^{(4)}B(1+\frac{d}{2},m-\frac{d}{2})+\frac{\Gamma(m-\frac{d}{2})}{(4\pi)^{\frac{d}{2}}(m-1)!},\]
we can deduce from Theorem \ref{Theorem21} that

\begin{theorem}\label{Theorem22} 
The following pointwise estimate holds for the heat kernel:
\begin{align*}
\|\mathbf{K}_U(x,y;t)-\mathbf{K}_0(x,y;t)\|\leq \big(C_d^{(5)}\rho(x,y)^{-d}+C_d^{(6)}\big)\cdot \frac{J_{m}(\rho(x)+\rho(y);t)}{2\sqrt{\pi t}}.
\end{align*}
\end{theorem}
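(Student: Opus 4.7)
The plan is to read off Theorem~\ref{Theorem22} as a direct consequence of Theorem~\ref{Theorem21} combined with the already-established bound (\ref{28}) on $\mathbf{G}_U^{(m)}(x,x)$. The only work left is to package the $\rho(x)$ and $\rho(y)$ contributions into a single $\rho(x,y)^{-d}$ term and collect constants.

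First I would invoke Theorem~\ref{Theorem21}, which provides
\[
\|\mathbf{K}_U(x,y;t)-\mathbf{K}_0(x,y;t)\| \leq \Bigl( \bigl(\|\mathbf{G}_U^{(m)}(x,x)\| \|\mathbf{G}_U^{(m)}(y,y)\|\bigr)^{1/2} + \tfrac{\Gamma(m-d/2)}{(4\pi)^{d/2}(m-1)!}\Bigr)\frac{J_m(\rho(x)+\rho(y);t)}{2\sqrt{\pi t}}.
\]
Next I would apply (\ref{28}) at both $x$ and $y$, which bounds each of $\|\mathbf{G}_U^{(m)}(x,x)\|$ and $\|\mathbf{G}_U^{(m)}(y,y)\|$ by $mC_d^{(4)}B(1+\tfrac{d}{2},m-\tfrac{d}{2})+C_d^{(5)}\rho(\cdot)^{-d}$. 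Since $\rho(x,y)=\min(\rho(x),\rho(y))$, one has $\rho(x)^{-d}\leq\rho(x,y)^{-d}$ and similarly for $y$, so both factors are at most $mC_d^{(4)}B(1+\tfrac{d}{2},m-\tfrac{d}{2})+C_d^{(5)}\rho(x,y)^{-d}$. Taking the geometric mean yields the same upper bound (the geometric mean of two equal upper bounds is that upper bound), so
\[
\bigl(\|\mathbf{G}_U^{(m)}(x,x)\| \|\mathbf{G}_U^{(m)}(y,y)\|\bigr)^{1/2}\leq mC_d^{(4)}B\bigl(1+\tfrac{d}{2},m-\tfrac{d}{2}\bigr)+C_d^{(5)}\rho(x,y)^{-d}.
\]

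Adding the remaining $\frac{\Gamma(m-d/2)}{(4\pi)^{d/2}(m-1)!}$ term and using the definition of $C_d^{(6)}$ introduced just before the statement, the prefactor in Theorem~\ref{Theorem21} becomes $C_d^{(5)}\rho(x,y)^{-d}+C_d^{(6)}$. Multiplying by $\frac{J_m(\rho(x)+\rho(y);t)}{2\sqrt{\pi t}}$ completes the proof.

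There is no genuine obstacle here: all of the analytic input (finite propagation speed, the vector-valued Safarov estimate (\ref{Safarov3}), the spectral-calculus identity for $\mathbf{G}_U^{(m)}(x,x)$, and the heat-from-cosine transmutation formula) has been done in Theorem~\ref{Theorem21} and in the derivation of (\ref{28})--(\ref{29}). The only mildly delicate point is making sure that replacing $\rho(x)^{-d}$ and $\rho(y)^{-d}$ by $\rho(x,y)^{-d}$ is valid for both factors simultaneously before extracting the square root; this is immediate once one notes that the geometric mean of two copies of the same upper bound is that upper bound.
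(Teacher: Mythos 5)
Your proof is correct and is essentially exactly the argument the paper intends (the paper merely states that Theorem~\ref{Theorem22} is deduced from Theorem~\ref{Theorem21} together with (\ref{28}), (\ref{29}) and the definition of $C_d^{(6)}$). The one step you flag as mildly delicate --- bounding both $\rho(x)^{-d}$ and $\rho(y)^{-d}$ by $\rho(x,y)^{-d}$ before taking the geometric mean --- is handled correctly, and the passage from the matrix inequality (\ref{28}) to the operator-norm bound is legitimate since $\mathbf{G}_U^{(m)}(x,x)$ is a non-negative self-adjoint matrix.
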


\subsection{Optimizing cut-off functions}\label{section23}

This section is devoted to proving Theorem \ref{Theorem11}. To this end it suffices 
to bound  $J_m(R;t)$ for $R=\rho(x)+\rho(y)$. In general we suppose $R>0$.
The Hermite polynomials
\[H_n(s)=(-1)^ne^{s^2}\frac{d^n}{ds^n}e^{-s^2}\ \ \ (n=0,1,2,\ldots)\]
can be written as
\[H_n(s)=\sum_{k=0}^{\lfloor \frac{n}{2}\rfloor}\frac{(-1)^kn!}{k!(n-2k)!}(2s)^{n-2k},\]
from which it is easy to deduce that
\[\frac{d^n}{ds^n}(e^{-\frac{s^2}{4t}})=\sum_{k=0}^{\lfloor \frac{n}{2}\rfloor}\frac{(-\frac{1}{2})^n(-1)^kn!}{k!(n-2k)!}t^{k-n}s^{n-2k}e^{-\frac{s^2}{4t}} \ \ \ (n=0,1,2,\ldots).
\]
Consequently, by Leibniz's rule one gets for any non-negative integer $n\leq2m$ that
\[
\frac{d^n}{ds^n}\Big(\psi(s)e^{-\frac{s^2}{4t}}\Big)=\sum_{j=0}^n\sum_{k=0}^{\lfloor \frac{j}{2}\rfloor}{n \choose j}\psi^{(n-j)}\frac{(-\frac{1}{2})^j(-1)^kj!}{k!(j-2k)!}t^{k-j}s^{j-2k}e^{-\frac{s^2}{4t}}.
\]

To optimize the choice of cutoff functions we first let $\psi_0$ denote a fixed real-valued function 
in $C^{2m}(\mathbb{R})$ such that $\psi_0(s)=0$ for $s\leq0$ and $\psi_0(s)=1$ for $s\geq1$.
Later on we will give concrete examples of $\psi_0$ and thus
\[M_j(\psi_0)=\max_{0\leq s\leq 1}\Big|\frac{d^j\psi_0}{ds^j}(s)\Big|\ \ \ (j=0,1,\ldots,2m)
\]
can be explicitly determined. Then for any  $0<\epsilon_1<\epsilon_2<R$ 
 define \[\psi_{\epsilon_1,\epsilon_2}(s)=\psi_0\Big(\frac{|s|-\epsilon_1}{\epsilon_2-\epsilon_1}\Big),\] 
 which is an even function in $C^{2m}(\mathbb{R})$
with $\mathrm{Supp}(1-\psi_{\epsilon_1,\epsilon_2})\subset(-R,R)$. 
We let the parameters $\epsilon_1,\epsilon_2$ (depending on both $R$ and $t$) behave in the following way:
\begin{itemize}
\item $\epsilon_2\rightarrow R$,
\item $\epsilon_2-\epsilon_1\equiv\frac{2t}{R}$.
\end{itemize}
With the help of Lemma \ref{lemma24}, it is not hard to show that if $0<t\leq\frac{R^2}{8}$, then
\begin{align}\label{basic}
\lim_{\epsilon_2\rightarrow R}\int_{\mathbb{R}}
\Big|\frac{d^n}{ds^n}(\psi_{\epsilon_1,\epsilon_2}(s)e^{-\frac{s^2}{4t}})\Big|ds
\leq Z(n,\psi_0,R;t)e^{-\frac{R^2}{4t}}\ \ \ (n\leq2m),
\end{align}
where $Z(n,\psi_0,R;t)$  is short for the rational function
\[\sum_{k=0}^{\lfloor \frac{n}{2}\rfloor}
\frac{n!\lceil \frac{n-2k-1}{2}\rceil! M_0(\psi_0)e^2R^{n-2k-1}}{2^{2n-2k-1}k!(n-2k)!}t^{1+k-n}+
  \sum_{j=0}^{n-1}\sum_{k=0}^{\lfloor \frac{j}{2}\rfloor}\frac{n!M_{n-j}(\psi_0)eR^{n-2k-1}}{2^{n-2}k!(j-2k)!(n-j)!}t^{1+k-n}.\]
In general, it follows straightforward from Leibniz's rule and (\ref{basic}) that
\begin{theorem}\label{Theoremop} Suppose $0<t\leq\frac{R^2}{8}$. Then
\[J_m(R;t)\leq\sum_{n=0}^m{m\choose n}Z(2n,\psi_0,R;t)e^{-\frac{R^2}{4t}}.\]
\end{theorem}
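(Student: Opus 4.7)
The plan is straightforward: Theorem \ref{Theoremop} is an immediate corollary of the preceding estimate \eqref{basic} once one expands the operator $(1-\frac{d^2}{ds^2})^m$ by the binomial theorem. Because $(1-\frac{d^2}{ds^2})^m$ has constant coefficients and the two constituents $1$ and $-\frac{d^2}{ds^2}$ commute, one has
\[
\Bigl(1-\frac{d^2}{ds^2}\Bigr)^m = \sum_{n=0}^{m}\binom{m}{n}(-1)^n\frac{d^{2n}}{ds^{2n}}.
\]
I would therefore take as a trial function the family $\psi_{\epsilon_1,\epsilon_2}\in A_R$ constructed above (which is legal since its support satisfies $\mathrm{Supp}(1-\psi_{\epsilon_1,\epsilon_2})\subset(-\epsilon_2,\epsilon_2)\subset(-R,R)$), apply the binomial formula to the integrand of $J_m(\psi_{\epsilon_1,\epsilon_2};t)$, and then use the triangle inequality to pull the sum outside:
\[
J_m(\psi_{\epsilon_1,\epsilon_2};t) \leq \sum_{n=0}^{m}\binom{m}{n}\int_{\mathbb{R}}\Bigl|\frac{d^{2n}}{ds^{2n}}\bigl(\psi_{\epsilon_1,\epsilon_2}(s)\,e^{-s^2/(4t)}\bigr)\Bigr|\,ds.
\]

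Next, I would apply \eqref{basic} (with $n$ there replaced by $2n$, which is still $\leq 2m$) to each term on the right, and then take the limit $\epsilon_2\to R$ with $\epsilon_2-\epsilon_1\equiv\frac{2t}{R}$ as prescribed. Since by definition
\[
J_m(R;t) = \inf_{\psi\in A_R} J_m(\psi;t) \leq \liminf_{\epsilon_2\to R} J_m(\psi_{\epsilon_1,\epsilon_2};t),
\]
this gives the bound
\[
J_m(R;t) \leq \sum_{n=0}^{m}\binom{m}{n} Z(2n,\psi_0,R;t)\, e^{-R^2/(4t)},
\]
which is exactly the asserted inequality. The hypothesis $0<t\leq R^2/8$ is used only through its role in the hypotheses of \eqref{basic}.

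There is essentially no serious obstacle. The only thing one has to verify carefully is that the interchange of the limit $\epsilon_2\to R$ with the sum and the integration is admissible in the form stated, but this is automatic because the sum is finite, the bound \eqref{basic} is already formulated as a limit-superior statement, and only an upper estimate is required. Nothing deeper than Leibniz's rule (already invoked in the preceding paragraph to produce $Z(n,\psi_0,R;t)$) and the binomial identity for $(1-\frac{d^2}{ds^2})^m$ enters, which is exactly what the authors signal in their parenthetical remark before stating the theorem.
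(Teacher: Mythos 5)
Your argument is exactly the one the paper has in mind: expand $(1-\frac{d^2}{ds^2})^m$ binomially (the operators $1$ and $-\frac{d^2}{ds^2}$ commute), use the triangle inequality to move the finite sum outside the integral, then apply the estimate \eqref{basic} with $n$ replaced by $2n$ to each term, and finally pass to the limit $\epsilon_2\to R$, which is legitimate since $J_m(R;t)$ is an infimum over $A_R$ and each $\psi_{\epsilon_1,\epsilon_2}$ with $\epsilon_2<R$ lies in $A_R$. This is precisely what the authors compress into the phrase ``it follows straightforward from Leibniz's rule and \eqref{basic},'' so the proposal is correct and takes essentially the same route as the paper.
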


Theorem \ref{Theorem11} is an immediate consequence of Theorems \ref{Theorem22}, \ref{Theoremop} with $m=\lceil\frac{d+1}{2}\rceil$.

 \begin{lemma}\label{lemma24}
If $\beta$ is a non-negative integer  and if $\rho\geq2\sqrt{t}$, then
\[
\int_{\rho}^{\infty}s^{\beta}e^{-\frac{s^2}{4t}}ds\leq 2 e \Big\lceil \frac{\beta-1}{2}\Big\rceil! \; \rho^{\beta-1}te^{-\frac{\rho^2}{4t}}.
\]
\end{lemma}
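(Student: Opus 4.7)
The plan is to reduce the tail integral to a Gamma-type integral over $[0,\infty)$, and then to use the hypothesis $\rho \geq 2\sqrt t$ to convert a (possibly) fractional exponent into an integer one. First I would substitute $u = (s^2-\rho^2)/(4t)$, so that $ds = (2t/s)\,du$ and $s = \sqrt{\rho^2+4tu}$, yielding
\[
\int_\rho^\infty s^\beta e^{-s^2/(4t)}\,ds \;=\; 2t\, e^{-\rho^2/(4t)}\int_0^\infty (\rho^2+4tu)^{(\beta-1)/2}\, e^{-u}\,du.
\]
Already the prefactor $2t\,e^{-\rho^2/(4t)}$ matches the right-hand side of the claim, so the task reduces to bounding the remaining integral by $e\,\lceil(\beta-1)/2\rceil!\,\rho^{\beta-1}$.

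Next I would factor out $\rho^{\beta-1}$ by writing $(\rho^2+4tu)^{(\beta-1)/2} = \rho^{\beta-1}(1+4tu/\rho^2)^{(\beta-1)/2}$. For $\beta\geq 1$, the hypothesis $\rho\geq 2\sqrt t$ (equivalently, $4t/\rho^2 \leq 1$) gives $(1+4tu/\rho^2)^{(\beta-1)/2} \leq (1+u)^{(\beta-1)/2}$, and because $1+u \geq 1$ this is in turn bounded by $(1+u)^{n}$ with $n = \lceil(\beta-1)/2\rceil$. The resulting integral is standard: expanding binomially and using $\int_0^\infty u^k e^{-u}\,du = k!$,
\[
\int_0^\infty (1+u)^n e^{-u}\,du \;=\; \sum_{k=0}^n \binom{n}{k}k! \;=\; n!\sum_{j=0}^n \frac{1}{j!} \;\leq\; e\, n!,
\]
which combined with the prefactor yields the claim for $\beta \geq 1$.

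The case $\beta=0$ I would dispatch in one line: the trivial bound $(\rho^2+4tu)^{-1/2}\leq \rho^{-1}$ applied to the substituted integral gives $\int_\rho^\infty e^{-s^2/(4t)}\,ds \leq (2t/\rho)\,e^{-\rho^2/(4t)}$, which is even stronger than the stated inequality since $2\leq 2e$.

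I do not foresee any serious obstacle here: the proof is a well-chosen substitution followed by an elementary binomial identity. The only delicate point is that when $\beta$ is even, the exponent $(\beta-1)/2$ is a half-integer, and passing to the integer ceiling costs a factor of at most $(1+u)^{1/2}$; this loss is absorbed precisely by the ceiling in the stated bound, while the factor of $e$ absorbs the partial exponential series $\sum_{j=0}^n 1/j!$. Both features of the constant therefore appear essentially forced by this strategy, which is reassuring.
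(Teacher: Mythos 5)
Your proof is correct. It is a genuinely different route from the one in the paper, which instead writes $\int_{\rho}^{\infty}s^{\beta}e^{-s^2/(4t)}ds = 2^{\beta}t^{(\beta+1)/2}\Gamma\bigl(\tfrac{\beta+1}{2},\tfrac{\rho^2}{4t}\bigr)$ and then invokes the explicit finite-sum formula $\Gamma(n,r)=(n-1)!\,e^{-r}\sum_{k=0}^{n-1}r^k/k!$ for integer arguments, handling the half-integer case via the auxiliary estimate $\Gamma(a,r)\le r^{-1/2}\Gamma(a+\tfrac12,r)$. Your shifted substitution $u=(s^2-\rho^2)/(4t)$ extracts the factor $e^{-\rho^2/(4t)}$ immediately, and your binomial expansion of $(1+u)^n$ plays exactly the role of the paper's explicit incomplete-Gamma formula, while the inequality $(1+u)^{(\beta-1)/2}\le(1+u)^{\lceil(\beta-1)/2\rceil}$ replaces the paper's half-integer-to-integer reduction. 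The ingredients are parallel but the packaging is different: your version is self-contained and does not require knowing or citing the closed form of $\Gamma(n,\cdot)$, whereas the paper's is shorter at the cost of relying on those special-function identities. In both proofs the factor $e$ absorbs a truncated exponential series and the ceiling absorbs the half-integer exponent, so both confirm that these two features of the constant are essentially forced. One small bonus of your route is the observation that the $\beta=0$ case does not even need the hypothesis $\rho\ge 2\sqrt t$.
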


\begin{proof}
Note first\[\int_{\rho}^{\infty}s^{\beta}e^{-\frac{s^2}{4t}}ds=2^{\beta}t^{\frac{\beta+1}{2}}\Gamma(\frac{\beta+1}{2},\frac{\rho^2}{4t}),\]
where $\Gamma(\cdot,\cdot)$ is the upper incomplete Gamma function. 
If $\frac{\beta+1}{2}$ is a positive integer then it is known that
$\Gamma(\frac{\beta+1}{2},r)=(\frac{\beta-1}{2})!\,e^{-r}\sum _{k=0}^{\frac{\beta-1}{2}}{\frac {r^{k}}{k!}}$
for all $r>0$. This partially proves the lemma simply by considering $\frac{\rho^2}{4t}\geq1$. If $\frac{\beta+1}{2}$ is a positive half-integer
then we can use $\Gamma(\frac{\beta+1}{2},r)\leq\frac{1}{\sqrt{r}}\Gamma(\frac{\beta+2}{2},r)$ and the previous explicit formula for 
$\Gamma(\frac{\beta+2}{2},r)$ to prove the remaining part of the lemma. This finishes the proof.
\end{proof}

Although there are many test functions for $\psi_0$,
we use an interpolating polynomial because $M_j(\psi_0)$ can be 
determined rather easily.
For any $n\in\mathbb{N}$, there exists a unique polynomial $P_n$
of degree $\leq 2n+1$ such that $P_n(0)=0$, 
$P_n(1)=1$, and
\[\frac{d^i}{ds^i}P_n\Big|_{s=0}=\frac{d^i}{ds^i}P_n\Big|_{s=1}=0\ \ \ (1\leq i\leq n).\]
We then define a function $\widetilde{P}_n$ on $\mathbb{R}$ such that it agrees with $P_n$ on $[0,1]$, 
equals 0 on $(-\infty,0)$, and  equals 1 on $(1,\infty)$. It is easy to check that $\widetilde{P}_n\in C^n(\mathbb{R})$.
This means that one can set $\psi_0=\widetilde{P}_{2m}$. A few examples of $P_n$ are listed below:
\begin{align*}
P_1(s)&=3s^2-2s^3,\\
P_2(s)&=10s^3-15s^4+6s^5,\\
P_3(s)&=35s^4-84s^5+70s^6-20s^7,\\
P_4(s)&=126s^5-420s^6+540s^7-315s^8+70s^9.
\end{align*}

 \section{Dirichlet boundary conditions}\label{section3}

We denote by $K_{U}^{(D)}(x,y;t)$  the Dirichlet heat kernel for an open set $U\subset\mathbb{R}^d$. Michiel van den Berg's (\ref{BestBerg}) gives
\begin{equation}\label{formula32}
|K_{U}^{(D)}(x,y;t)-K_{0}(x,y;t)|\leq (4\pi t)^{-d/2} \exp({-\frac{|x-y|^2+4\delta^2}{4t}})\sum_{j=1}^{d}\frac{2^j\delta^{2j-2}}{(j-1)!t^{j-1}}.
\end{equation}
To compare,  Theorem \ref{Theorem11} is a slight improvement of (\ref{formula32}) for the short-time diagonal elements of the Dirichlet heat kernel
 if $d\geq5$.

 It is known that
$G_U^{(m)}(x,x)\leq G_0^{(m)}(x,x)$  for any $x\in U$, where $G_U^{(m)}$ is interpreted in accordance with the choice that $\Delta_U$
denotes the Dirichlet Laplacian on $U$. Thus it follows from Theorem \ref{Theorem21} and (\ref{29}) that
\begin{equation}\label{formula31}|K_{U}^{(D)}(x,y;t)-K_{0}(x,y;t)|
\leq
\frac{\Gamma(m-\frac{d}{2})}{(4\pi)^{\frac{d}{2}}(m-1)!\sqrt{\pi}}
\cdot \frac{J_m(\rho(x)+\rho(y);t)}{\sqrt{t}}.\end{equation}

We remark that two other estimates by Michiel van den Berg (\cite{Berg1}) reading 
\begin{align}
|K_{U}^{(D)}(x,x;t)-K_{0}(x,x;t)|
&\leq
\frac{2d}{(4\pi t)^{\frac{d}{2}}}
\exp(-\frac{\rho(x)^2}{dt}),\\
|K_{U}^{(D)}(x,y;t)-K_{0}(x,y;t)|&\leq \frac{2d}{(4\pi t)^{d/2}}\exp(-(3-2\sqrt{2})\frac{(\max\{\rho(x), \rho(y)\})^2}{dt}),
\end{align}
have been widely used in the study of short-time asymptotics of the heat trace  (see e.g. \cite{Berg87,Berg88,ST}) and 
some other related problems (see e.g. \cite{Berg07}).

  \section{Neumann boundary conditions}\label{section4}
  
 Let $K_{U}^{(N)}(x,y;t)$ denote the Neumann heat kernel for a smooth bounded open set $U\subset\mathbb{R}^d$.  
  As an application of Theorem \ref{Theorem11}   (or Theorem \ref{Theorem41} with $m=\lceil\frac{d+1}{2}\rceil$),  
  there exists a positive function $g$ on $U$   such that if $0<t\leq\frac{\rho(x)^2}{2}$ then
\begin{equation}\label{formula45}
K_{U}^{(N)}(x,x;t)\leq (4\pi t)^{-d/2}+g(x)\cdot t^{-\alpha}\cdot \exp({-\frac{\rho(x)^2}{t}}),
\end{equation}
where $\alpha=2\lceil\frac{d+1}{2}\rceil-\frac{1}{2}$. This answers a question raised by Lacey (\cite{Lacey}) who conjectured  that 
for the class of smooth bounded strictly star-shaped domains (\ref{formula45})
holds for some $\alpha>\frac{d}{2}$ as long as time $t$ is sufficiently small.
 Lacey also asked 
to extend  the main result in \cite{Lacey} to unbounded domains, domains with non-smooth boundary, 
or more general boundary conditions. Because of Theorem \ref{Theorem11} this is indeed doable
for the diagonal element of the corresponding Neumann heat kernel.

 In the rest of the section we also give a replacement of  (\ref{28}) for $G_U^{(m)}(x,x)$ 
 without using Safarov's  estimate (\ref{Safarov}). 
Here $G_U^{(m)}$ is interpreted in accordance with the choice that $\Delta_U$
denotes the Neumann Laplacian on $U$. This can be done by appealing to partial domain monotonicity 
of the Neumann heat kernel. 

For simplicity we  assume that $U\subset\mathbb{R}^d$ is  a smooth bounded open set.
Note first (see e.g. \cite[(3.33)]{Borisov}, \cite[$\S$3.4]{Davies})
\begin{equation}
\label{41}
G_U^{(m)}(x,x)=\frac{1}{(m-1)!}\int_0^{\infty}t^{m-1}e^{-t}K_U^{(N)}(x,x;t)dt,
\end{equation}
  which means that we need instead to prepare suitable upper bounds for $K_U^{(N)}(x,x;t)$.
  In contrast to the Dirichlet boundary problems,
  there does not exist a general domain monotonicity principle (\cite{Bass}) claiming for any $U_2\subset U_1$ that
  \[K_{U_1}^{(N)}(x,y;t)\leq K_{U_2}^{(N)}(x,y;t)\ \ \ ((x,y,t)\in U_2\times U_2\times \mathbb{R}^{+}).\]
  Even though, Kac's original idea (\cite{Kac}) of comparing $K_U^{(D)}(x,x;t)$ with $K_{B_x}^{(D)}(x,x;t)$
 where $B_x$ is chosen here\footnote{To be precise, Kac (\cite{Kac})  set $B_x$ to be the largest open cube contained in $U$.} to be the ball in $\mathbb{R}^d$ with center $x$ and radius $\rho(x)$, still works for the Neumann boundary problems. 
 This is exactly a result by Kendall (\cite{Kendall}, see also \cite{Pascu}; if $U$ is convex then see \cite{Chavel})
 stating
\begin{equation}\label{42}
K_U^{(N)}(x,x;t)\leq K_{B_x}^{(N)}(x,x;t)\ \ \ ((x,t)\in U\times\mathbb{R}^{+}),
\end{equation}
which combined with (\ref{41}) yields
\begin{equation}\label{43}
G_U^{(m)}(x,x)\leq G_{B_x}^{(m)}(x,x)\ \ \  (x\in U).
\end{equation}
 Let
 $\mathbb{U}_d(x,y;t)$
  denote the Neumann heat kernel for the $d$-dimensional unit ball.    
  The Pascu-Gageonea resolution (\cite{PG}) of the Laugesen-Morpurgo conjecture (\cite{LM}) says that
\begin{equation}\label{444}
\mathbb{U}_d(x,x;t)<\mathbb{U}_d(y,y;t)
\end{equation}
 holds for all $t>0$ and all $x,y$ in the $d$-dimensional unit ball with $|x|<|y|$. This result implies that
 \begin{equation}
 \label{45}
 \mathbb{U}_d(0,0;t)<\frac{\mathrm{Tr}(e^{-t\Delta_d^{(N)}})}{\omega_d},
 \end{equation}
  where $\Delta_d^{(N)}$ is short for the Neumann Laplacian on the $d$-dimensional unit ball.

Now let $x\in U$ be fixed.  It is straightforward to verify that
\begin{equation}\label{formula46}
K_{B_x}^{(N)}(x,x;t)=\frac{\displaystyle\mathbb{U}_d\big(0,0;\frac{t}{\rho(x)^2}\big)}{\rho(x)^d}.
\end{equation}
Hence by considering (\ref{43}), (\ref{41}) with $U$ replaced by $B_x$,  (\ref{formula46}) and  (\ref{45}), we get
  \begin{align*}
  G_U^{(m)}(x,x)&\leq\frac{1}{(m-1)!}\cdot\int_0^{\infty}t^{m-1}e^{-t}\frac{\displaystyle\mathrm{Tr}(e^{-\frac{t}{\rho(x)^2}\Delta_d^{(N)}})}{\omega_d\rho(x)^d}dt  \nonumber\\
    &=\frac{\rho(x)^{2m-d}}{(m-1)!\omega_d}\cdot\int_0^{\infty}t^{m-1}e^{-t\rho(x)^2}\mathrm{Tr}(e^{-t\Delta_d^{(N)}})dt\nonumber\\
    &\leq\frac{\rho(x)^{2m-d}}{(m-1)!\omega_d}\cdot\Big(\int_0^{1}t^{m-1}\mathrm{Tr}(e^{-t\Delta_d^{(N)}})dt+\mathrm{Tr}(e^{-\Delta_d^{(N)}})\int_0^{\infty}t^{m-1}e^{-t\rho(x)^2}dt\Big)\nonumber\\
    &=\frac{\rho(x)^{2m-d}}{(m-1)!\omega_d}\cdot\int_0^{1}t^{m-1}\mathrm{Tr}(e^{-t\Delta_d^{(N)}})dt+ \frac{\mathrm{Tr}(e^{-\Delta_d^{(N)}})}{\omega_d\rho(x)^d},\nonumber
     \end{align*}
     where in the last inequality we have used the fact $\mathrm{Tr}(e^{-t\Delta_d^{(N)}})\leq \mathrm{Tr}(e^{-\Delta_d^{(N)}})$ for all $t\geq1$.
 This estimate together with (\ref{29})  gives from Theorem \ref{Theorem21}  the following
 \begin{theorem}\label{Theorem41} 
 Let $U\subset\mathbb{R}^d$  a smooth bounded open set and let $m\in\mathbb{N}$ with $m>\frac{d}{2}$. 
For any $t>0$ and any $x,y$ in $U$ one has
\[\big|K_{U}^{(N)}(x,y;t)-(4\pi t)^{-d/2} \exp({-\frac{|x-y|^2}{4t}})\big|\leq
N_d(x,y)\cdot \frac{J_m(\rho(x)+\rho(y);t)}{2\sqrt{\pi t}},\]
where
\begin{align*}
N_d(x,y)&=\frac{\displaystyle\int_0^{1}t^{m-1}\mathrm{Tr}(e^{-t\Delta_d^{(N)}})dt}{(m-1)!\omega_d}\cdot(\max\{\rho(x),\rho(y)\})^{2m-d}+\frac{\Gamma(m-\frac{d}{2})}{(4\pi)^{\frac{d}{2}}(m-1)!}+\\
&\ \ \ \ \frac{\mathrm{Tr}(e^{-\Delta_d^{(N)}})}{\omega_d}\cdot(\min\{\rho(x),\rho(y)\})^{-d}.
\end{align*}
\end{theorem}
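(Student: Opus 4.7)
The plan is to combine the general bound of Theorem \ref{Theorem21} (in the scalar case $N=1$) with the pointwise control on $G_U^{(m)}(x,x)$ that has just been assembled in the text preceding the statement. Specializing Theorem \ref{Theorem21} to $\Delta_U$ equal to the Neumann Laplacian (so that the heat kernel becomes $K_U^{(N)}$) and using (\ref{29}) to identify the diagonal of the free Green function, I obtain
\[
\bigl|K_U^{(N)}(x,y;t) - K_0(x,y;t)\bigr| \leq \Bigl(\bigl(G_U^{(m)}(x,x)\, G_U^{(m)}(y,y)\bigr)^{1/2} + \frac{\Gamma(m-\tfrac{d}{2})}{(4\pi)^{d/2}(m-1)!}\Bigr) \cdot \frac{J_m(\rho(x)+\rho(y);t)}{2\sqrt{\pi t}}.
\]
So the only real task is to bound $(G_U^{(m)}(x,x)\, G_U^{(m)}(y,y))^{1/2}$ in a form that reproduces the first and third summands of $N_d(x,y)$.

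First I would chain (\ref{43}), (\ref{41}) applied on the ball $B_x$, (\ref{formula46}) and (\ref{45}) — Kendall's domain monotonicity, the subordination of the resolvent power to the semigroup, the exact scaling of $K^{(N)}_{B_x}(x,x;\cdot)$, and the Pascu--Gageonea inequality $\mathbb{U}_d(0,0;t) < \mathrm{Tr}(e^{-t\Delta_d^{(N)}})/\omega_d$ — to arrive at
\[
G_U^{(m)}(x,x) \leq \frac{\rho(x)^{2m-d}}{(m-1)!\,\omega_d}\int_0^\infty t^{m-1} e^{-t\rho(x)^2}\,\mathrm{Tr}\bigl(e^{-t\Delta_d^{(N)}}\bigr)\, dt.
\]
Then I would split the integral at $t=1$: on $(0,1)$ I just drop the factor $e^{-t\rho(x)^2}\leq 1$ to produce the first trace constant, and on $(1,\infty)$ I use $\mathrm{Tr}(e^{-t\Delta_d^{(N)}})\leq \mathrm{Tr}(e^{-\Delta_d^{(N)}})$ together with $\int_0^\infty t^{m-1}e^{-t\rho(x)^2}dt=(m-1)!/\rho(x)^{2m}$. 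This yields the pointwise bound
\[
G_U^{(m)}(x,x) \leq A\, \rho(x)^{2m-d} + B\, \rho(x)^{-d},
\]
where $A$ and $B$ are exactly the two trace constants appearing in $N_d(x,y)$.

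Next I would apply $\sqrt{ab} \leq \max\{a,b\}$ with $a = G_U^{(m)}(x,x)$, $b = G_U^{(m)}(y,y)$. Since $m > d/2$ the exponent $2m-d$ is positive, so taking the maximum termwise in the two summands separately gives
\[
\bigl(G_U^{(m)}(x,x)\, G_U^{(m)}(y,y)\bigr)^{1/2} \leq A\,(\max\{\rho(x),\rho(y)\})^{2m-d} + B\,(\min\{\rho(x),\rho(y)\})^{-d}.
\]
Adding the free Green constant $\Gamma(m-\tfrac{d}{2})/((4\pi)^{d/2}(m-1)!)$ reconstitutes $N_d(x,y)$ precisely as in the statement, and inserting the resulting bound into the displayed consequence of Theorem \ref{Theorem21} closes the argument.

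I do not foresee any genuine obstacle here: every analytic ingredient is cited or derived in the text above the statement. The only non-formal computation is the split of the time integral at $t=1$, which is elementary; the one point that requires a little care is the grouping of the $\rho(x),\rho(y)$ dependences, so that the exponent $2m-d$ attaches to $\max$ while $-d$ attaches to $\min$.
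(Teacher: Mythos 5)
Your proposal is correct and follows the same route as the paper: specialize Theorem \ref{Theorem21} to the scalar Neumann Laplacian, bound $G_U^{(m)}(x,x)$ via the chain of (\ref{43}), (\ref{41}), (\ref{formula46}), (\ref{45}), split the time integral at $t=1$, and then control the geometric mean $(G_U^{(m)}(x,x)\,G_U^{(m)}(y,y))^{1/2}$. The only detail you make explicit that the paper leaves implicit is the final estimate $\sqrt{ab}\leq\max\{a,b\}$ followed by the termwise distribution of $\max$ over the two summands (using that $2m-d>0$ to pull $\max$ inside the positive power and that $-d<0$ to turn it into $\min$), which is exactly what is needed to recover the precise shape of $N_d(x,y)$.
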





\begin{thebibliography}{99}


\bibitem{Bass}
R. Bass, K. Burdzy,
On domain monotonicity of the Neumann heat kernel, J. Funct. Anal. \textbf{116} (1993) 215--224.

\bibitem{Berg1}
M. van den Berg, Bounds on Green's functions of second-order differential equations,
J. Math. Phys. \textbf{22} (1981) 2452--2455.

\bibitem{Berg87}
M. van den Berg, On the asymptotics of the heat equation and  bounds on traces associated with the Dirichlet Laplacian,
J. Funct. Anal. \textbf{71} (1987) 279--293.



\bibitem{Berg2}
M. van den Berg, Heat equation and the principle of not feeling the boundary,
Proc. Royal Soc. Edinburgh  A, \textbf{112} (1989) 257--262.

\bibitem{Berg07}
M. van den Berg, A. Dall' Acqua, G.~H. Sweers,
Estimates for the expected lifetime of
conditioned Brownian motion,
Proc. Royal Soc. Edinburgh  A, \textbf{137} (2007) 1091--1099.

\bibitem{Berg88}
M. van den Berg, S. Srisatkunarajah, Heat equation for a region in $\mathbb{R}^2$ with a polygonal boundary,
J. London Math. Soc. \textbf{37} (1988) 119--127.



\bibitem{Bordag}
M. Bordag, U. Mohideen, V.~M. Mostepanenko, New developments in the
Casimir effect, Physics Reports \textbf{353} (2001) 1--205.

\bibitem{Borisov}
N.~V. Borisov, W. M\"{u}ller, R. Schrader, 
Relative index theorems and supersymmetric scattering theory,
Commun. Math. Phys. \textbf{114} (1988) 475--513.


\bibitem{Chavel}
I. Chavel, Heat diffusion in insulated convex domains, J. London Math. Soc. \textbf{34} (1986) 473--478.

\bibitem{Cheeger}
J. Cheeger, M. Gromov, M. Taylor,
Finite propagation speed, kernel estimates for functions of the Laplace operator, and the geometry of complete Riemannian manifolds,
J. Diff. Geom. \textbf{17} (1982) 15--53.

\bibitem{Coulhon}
T. Coulhon, A. Sikora,
Gaussian heat kernel upper bounds via Phragm\'{e}n-Lindel\"{o}f theorem, 
Proc. London Math. Soc. \textbf{96} (2008) 507--544.

\bibitem{Davies}
E.~B. Davies, Heat Kernels and Spectral Theory, Cambridge Univ. Press, Cambridge, 1989. 

\bibitem{Davies2}
E.~B. Davies, Spectral properties of compact manifolds and changes of metric, Amer. J. Math.  \textbf{112} (1990) 15--39. 

\bibitem{Dodziuk}
J. Dodziuk,
V. Mathai,
Approximation $L^2$ invariants of amenable covering 
spaces: A heat kernel approach,  Contemp. Math. \textbf{211}, Amer.
Math. Soc., Providence, 1997, pp. 151--167. 

\bibitem{Elizalde}
E. Elizalde,
Ten Physical Applications of Spectral Zeta Functions, Second edition,
Lecture Notes in Physics \textbf{855},  Springer-Verlag Berlin Heidelberg 2012.


\bibitem{Gilkey}
P.~B. Gilkey, Invariance Theory, the Heat Equation, and the Atiyah-Singer Index Theorem, Second edition,
CRC Press, Boca Raton, 1995.
 
\bibitem{Hsu}
E.~P. Hsu, On the principle of not feeling the boundary for diffusion processes, 
J. London Math. Soc. \textbf{51} (1995) 373--382.

\bibitem{Hunt}
G.~A. Hunt, Some theorems concerning Brownian motion,
Trans. Amer. Math. Soc. \textbf{81} (1956) 294--319. 


\bibitem{Kac}
M. Kac, On some connections between probability theory and differential and integral equations,
Proc.
Second
Berkeley
Symposium
on
Mathematical
Statistics
and
Probability,
1950,
pp.
189-215,
University of California Press, Berkeley and Los Angeles, 1951.



\bibitem{Kendall}
W. Kendall,
Coupled Brownian motions and partial domain monotonicity for the Neumann heat kernel, J. Funct. Anal. \textbf{86} (1989) 226--236.


\bibitem{Lacey}
A.~A. Lacey,
An upper estimate for a heat kernel with Neumann boundary condition,
Bull. London Math. Soc. \textbf{25} (1993) 453--462.

\bibitem{LM}
R.~S. Laugesen, C. Morpurgo,
Extremals for eigenvalues of Laplacians under conformal mapping,
J. Funct. Anal. \textbf{155} (1998) 64--108.

\bibitem{LS}
W. L\"{u}ck, T. Schick,
$L^2$-torsion of hyperbolic manifolds of finite volume,
Geom. Funct. Anal. \textbf{9} (1999) 518--567.

\bibitem{Pascu}
M. Pascu, Mirror coupling of reflecting Brownian motion and an application to Chavel's conjecture,
Electronic J. Probab. \textbf{16} (2011) 504--530.


\bibitem{PG}
M. Pascu, M. Gageonea,
Monotonicity properties of the Neumann heat kernel in the ball, J. Funct. Anal. \textbf{260} (2011) 490--500.

\bibitem{PV}
N. Peyerimhoff, I. Veselic,
Integrated density of states for ergodic random Schr\"{o}dinger operators on manifolds,
Geom. Dedicata \textbf{91} (2002) 117--135.



\bibitem{Ray54}
D. Ray, On spectra of second-order differential operators, 
 Trans. Amer. Math. Soc. \textbf{77} (1954) 299--321.
 
 \bibitem{Rosenblatt}
M. Rosenblatt, 
On a class of Markov processes,
Trans. Amer. Math. Soc. 
\textbf{71} (1951) 120--135.


\bibitem{Safarov}
Y. Safarov,
Fourier Tauberian theorems and applications,
J. Funct. Anal. \textbf{185} (2001) 111--128.

\bibitem{Sikora}
A. Sikora, Riesz transform, Gaussian bounds and the method of wave equation, Math. Z. \textbf{247} (2004) 643--662.

\bibitem{Simon}
B. Simon, Functional
 Integration
 and Quantum
 Physics,
 Academic
 Press,
New
 York,
 1979.


\bibitem{ST}
F. Steiner, P. Trillenberg,
Refined asymptotic expansion for the partition function
of unbounded quantum billiards, 
J. Math. Phys. \textbf{31} (1990) 1670--1676.


 

\bibitem{Strichartz}
R. Strichartz, Spectral asymptotics revisited, J. Fourier Anal. Appl. \textbf{18} (2012) 626--659.

\bibitem{Taylor}
M.~E. Taylor, Partial Differential Equations I: Basic Theory, 2nd edition, Springer, 2011.









\end{thebibliography}
\end{document}